\newtheorem{theorem}{Theorem}[section]
\newtheorem{lemma}[theorem]{Lemma}
\newtheorem{corollary}[theorem]{Corollary}
\newtheorem{proposition}[theorem]{Proposition}
\theoremstyle{definition}
\newtheorem{remark}[theorem]{Remark}
\begin{document}

\title[Dual Hardy operator minus identity]{Best bounds for the dual Hardy operator minus identity on decreasing functions}

\author[A. Ben Said]{Achraf Ben Said$^{*}$}
\author[G. Sinnamon]{Gord Sinnamon$^{**}$}

\address{Department of Analysis and Applied Mathematics, Complutense University  of Madrid, 28040 Madrid, Spain.}
\email{achbensa@ucm.es}

\address{Department of Mathematics, Western University, London, Canada.}
\email{sinnamon@uwo.ca}

\thanks{$^*$ The author was partially supported by grant PID2020-113048GB-I00, funded by MCIN/AEI/ 10.13039/501100011033.}

\thanks{$^{**}$ Support from the Natural Sciences and Engineering Research Council of Canada is gratefully acknowledged.}

\subjclass{26D15, 47B37, 46E30}
\keywords{Hardy operator, Dual Hardy operator, Hardy operator minus identity, Dual Hardy operator minus identity, Optimal constants.}

\begin{abstract} The distance from the identity operator $I$ to $H^*$, the dual of the Hardy averaging operator, is studied on the cone of nonnegative, nonincreasing functions in Lebesgue space. The exact value is obtained. Optimal lower bounds are also given for difference, $H^*-I$, of these two operators acting on the same cone. A positive answer is given to a conjecture made in ``The norm of Hardy-type oscillation operators in the discrete and continuous settings'' by A. Ben Said, S. Boza, and J. Soria. Preprint, 2024. In addition, a direct comparison, with optimal constants, is given between the operators $H-I$ and $H^*-I$ acting on the cone.
\end{abstract}

\maketitle

\section{Introduction}
The classical Hardy averaging operator $H$ and its dual operator $H^*$ are given by 
$$
 Hf(x)=\frac1x\int_0^xf(t)\,dt\quad\text{and}\quad H^*f(x)=\int_x^\infty f(t)\,\frac{dt}t.
$$
They are bounded operators on $L^p=L^p(0,\infty)$ when $1<p<\infty$, but are not bounded below. Indeed, neither is bounded below even when restricted to nonnegative functions in $L^p$. However, on nonnegative functions, the norms $\|Hf\|_p$ and $\|H^*f\|_p$ are equivalent to each other. Kolyada, in \cite{Ko2}, completed the investigation begun in \cite{Krug} and \cite{BoSo} on the optimal constants for this equivalence. A key idea in this work connects it with finding optimal upper and lower bounds for the operator $H-I$ restricted to nonnegative, nonincreasing functions. 
\begin{theorem}[\cite{Ko2}]
\label{Kol2}
Let $1<p<\infty$ and let $f$ be a nonnegative, nonincreasing function. Then
\begin{align}
\label{K11}
(p-1)^{-1/p}\|f\|_p &\leq \|(H-I)f\|_p \leq (p-1)^{-1}\|f\|_p,\\
\intertext{if $1<p\leq 2$, and}
\label{K22}
(p-1)^{-1}\|f\|_p &\leq \|(H-I)f\|_p \leq (p-1)^{-1/p}\|f\|_p,
\end{align}
if $2 \leq p <\infty$. The constants $p-1$ and $(p-1)^{1/p}$ in \eqref{K11} and \eqref{K22} are the best possible.
\end{theorem}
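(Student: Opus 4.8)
The plan is to transform the problem into a sharp comparison between $H$ and $H^*$ on the positive cone, to identify the extremal configurations, and then to establish the resulting two–sided estimate.

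\emph{A reformulation.} By homogeneity and a density argument it suffices to prove each inequality for $f$ in a convenient dense subclass of the cone (say nonnegative, nonincreasing, absolutely continuous functions vanishing off a compact set), for which $f\in L^p$ and the integrations by parts below are legitimate. The key observation is that every such $f$ equals $H^*\Phi$ with $\Phi:=-t\,f'(t)\ge0$, that $H^*\Phi$ is automatically nonnegative and nonincreasing for any $\Phi\ge0$, and that a Fubini computation yields
\[
 (H-I)H^*\Phi=H\Phi .
\]
Hence on the cone $\|(H-I)f\|_p/\|f\|_p=\|H\Phi\|_p/\|H^*\Phi\|_p$, and as $f$ ranges over the cone, $\Phi$ ranges over all nonnegative functions. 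Thus Theorem~\ref{Kol2} is equivalent to the assertion that, for $\Phi\ge0$, the norms $\|H\Phi\|_p$ and $\|H^*\Phi\|_p$ are comparable with optimal constants $(p-1)^{-1/p}$ and $(p-1)^{-1}$ (in the order dictated by whether $p\le2$ or $p\ge2$) — this is exactly the link with the $H$–$H^*$ equivalence alluded to in the introduction.

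\emph{Sharpness.} For the point mass $\Phi=\delta_1$ (so $f=H^*\delta_1$ equals $1$ on $(0,1)$ and $1/x$ on $(1,\infty)$) one has $H\Phi(x)=\tfrac1x\chi_{(1,\infty)}(x)$ and $H^*\Phi=\chi_{(0,1)}$, whence $\|H\Phi\|_p=(p-1)^{-1/p}\|H^*\Phi\|_p$; this shows $(p-1)^{-1/p}$ cannot be improved, as the lower bound in \eqref{K11} and the upper bound in \eqref{K22}. For the power functions $\Phi_a(t)=t^{-a}$ one has $H\Phi_a=(1-a)^{-1}\Phi_a$ and $H^*\Phi_a=a^{-1}\Phi_a$, so, truncating to stay in $L^p$ and letting $a\to1/p$, the ratio $\|H\Phi_a\|_p/\|H^*\Phi_a\|_p=a/(1-a)\to(p-1)^{-1}$; this shows $(p-1)^{-1}$ cannot be improved either, as the upper bound in \eqref{K11} and the lower bound in \eqref{K22}.

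\emph{The inequalities.} None of the four bounds is pointwise: $H\Phi$ is neither always $\ge$ nor always $\le$ a fixed multiple of $H^*\Phi$ (already for $\Phi$ a sum of two point masses), so the monotone structure must be used globally. For absolutely continuous $f$, writing $g=(H-I)f$ and $\Phi=-f'$, integration by parts gives $\|f\|_p^p=p\int_0^\infty x f^{p-1}\Phi\,dx$ and $\|(H-I)f\|_p^p=\tfrac p{p-1}\int_0^\infty x g^{p-1}\Phi\,dx$, which reduces the two bounds with constant $(p-1)^{-1/p}$ to the single statement that $\int_0^\infty x\bigl(g^{p-1}-f^{p-1}\bigr)\Phi\,dx$ has the sign of $2-p$, and the two bounds with constant $(p-1)^{-1}$ to the statement that $\int_0^\infty x\bigl(((p-1)g)^{p-1}-f^{p-1}\bigr)\Phi\,dx$ has the sign of $2-p$; the fact that these signs flip at $p=2$ is precisely why the constants $(p-1)^{-1/p}$ and $(p-1)^{-1}$ exchange roles there. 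To verify these sign conditions I would re-express $g$, $f$ and $\Phi$ through a single monotone auxiliary function — e.g. $w:=x\,(H-I)f$, which is nonnegative and nondecreasing — and then apply the elementary convexity inequality for $t\mapsto t^{p-1}$ on $[0,1]$, whose direction reverses at $p=2$; alternatively one attacks $\|H\Phi\|_p$ versus $\|H^*\Phi\|_p$ directly, using that the substitution $t\mapsto1/t$ interchanges $H$ and $H^*$ (up to a power weight) to halve the work.

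\emph{Main obstacle.} The crux is obtaining the \emph{sharp} estimate on the ``unfavorable'' side of $p=2$, where convexity of $t\mapsto t^{p-1}$ points the wrong way: there no pointwise bound is available, so one must genuinely exploit global monotonicity, and a reduction toward the extremal families (the single step and the power functions) or a rearrangement/convexity argument appears unavoidable; getting the constant exactly right there — rather than merely of the correct order — is the main difficulty. A secondary, routine, point is justifying the limiting extremizers, in particular that the truncated powers $\Phi_a$ really do realize $(p-1)^{-1}$ in the limit.
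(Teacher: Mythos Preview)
The paper does not contain a proof of Theorem~\ref{Kol2}: it is quoted as a result of Kolyada \cite{Ko2} (building on \cite{Krug} and \cite{BoSo}) and is used only as input for Corollary~\ref{H-I}. There is therefore nothing in the paper to compare your attempt to.

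As for the proposal itself, the reformulation is correct and is exactly the ``key idea'' the introduction alludes to: writing $f=H^*\Phi$ with $\Phi=-t f'(t)\ge0$ and using $(H-I)H^*=H$ turns the problem into the sharp two–sided comparison $\|H\Phi\|_p\asymp\|H^*\Phi\|_p$ on nonnegative $\Phi$. Your sharpness discussion is also on target; the $(p-1)^{-1/p}$ extremizer is just $f=\chi_{(0,1)}$ (no need to invoke $\delta_1$), and the truncated powers do witness $(p-1)^{-1}$.

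However, what you have written is an outline, not a proof. In the ``inequalities'' paragraph you silently change the meaning of $\Phi$ from $-t f'$ to $-f'$, which is confusing; more importantly, the claimed reduction for the constant $(p-1)^{-1}$ has the sign backwards (for $1<p\le2$ the upper bound $\|g\|_p\le(p-1)^{-1}\|f\|_p$ is equivalent to $\int_0^\infty x\big(((p-1)g)^{p-1}-f^{p-1}\big)\Phi\,dx\le0$, not $\ge0$). Most seriously, the phrases ``I would re-express\ldots and then apply the elementary convexity inequality'' and ``alternatively one attacks\ldots'' are plans, not arguments, and your own ``Main obstacle'' paragraph concedes that the genuinely hard step---the sharp bound on the side where convexity of $t\mapsto t^{p-1}$ points the wrong way---remains undone. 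That step is precisely the content of Kolyada's paper, and it does not fall out of a routine convexity or rearrangement argument; you would need to supply it in full before this could be called a proof.
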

The right-hand inequalities in \eqref{K11} and \eqref{K22} show that the distance from $H$ to $I$, operating on the cone of nonnegative, nonincreasing functions in $L^p$, is $(p-1)^{-1}$ when $1<p\le2$ and is $(p-1)^{-1/p}$ when $2\le p<\infty$.

The operator $H-I$ arises naturally in other, widely separated contexts, for example, the complex Beurling-Ahlfors transform reduces to $H-I$ when restricted to radial functions, see \cite{Ba} and \cite{St}. When $p=2$, $H-I$ is a Hilbert space isometry that is unitarily equivalent to the unilateral shift, see \cite{BHS}. It has been linked to the Laguerre polynomials in \cite{Krug}. In the theory of interpolation of operators, it acts on rearrangements of functions (nonnegative, nonincreasing functions) to give an equivalent norm in Lorentz spaces, see \cite[page 384]{BS}.

Recent work on $H-I$ and its discrete analogue may be found in \cite{BJ,Si,J,St2,BJ2} but systematic investigation of its dual operator $H^*-I$ is only beginning. The authors of \cite{ABS} proved that the distance from $H^*$ to $I$, operating on the cone of nonnegative, nonincreasing functions in $L^p$, is $p-1$ when $p\ge 2$ and is $2/e$ when $p=1$. They also conjectured that when $1<p<2$, the distance is $C_p^{1/p}$, where 
\begin{equation}\label{const}
C_p=\int_0^1\Big|(H^*-I)\chi_{(0,1)}(x)\Big|^p\,dx=\int_0^1|1+\ln x|^p\,dx.
\end{equation}
That is, they conjectured that for all nonnegative, nonincreasing functions $f$,
$$
\|(H^*-I)f\|_p \le C_p^{1/p}\|f\|_p
$$
and $C_p^{1/p}$ is the smallest constant for which this holds.
 
In the present article, this conjecture is verified and optimal lower bounds for $H^*-I$ are established. For easy comparison, results are presented in the form of Theorem \ref{Kol2}. Our main result  is the following theorem.
\begin{theorem}\label{main}
Let $1\le p<\infty$. If $f$ is nonnegative and nonincreasing, then
\begin{align}
\label{U1}
(p-1)\|f\|_p&\leq \|(H^{*}-I)f\|_p \leq C_p^{1/p}\|f\|_p\\
\intertext{if $1\le p\leq 2$ and}
\label{U2}
C_p^{1/p}\|f\|_p&\leq \|(H^{*}-I)f\|_p \leq (p-1)\|f\|_p
\end{align}
if $2 \leq p <\infty$. The constants $p-1$ and $C_p^{1/p}$ are optimal in both \eqref{U1} and \eqref{U2}.
\end{theorem}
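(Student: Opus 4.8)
\emph{Sharpness of the constants.} I would settle this first. The function $f=\chi_{(0,s)}$ is an extremizer for $C_p^{1/p}$: from $(H^{*}-I)\chi_{(0,s)}(x)=(\ln(s/x)-1)\chi_{(0,s)}(x)$ and the substitution $x=su$ together with \eqref{const}, one gets $\|(H^{*}-I)\chi_{(0,s)}\|_p=C_p^{1/p}\|\chi_{(0,s)}\|_p$, so $C_p^{1/p}$ can neither be lowered in \eqref{U1} nor raised in \eqref{U2}. For the constant $p-1$ I would use $f_\alpha(x)=x^{-\alpha}\chi_{(0,1)}(x)$ with $0<\alpha<1/p$; since $H^{*}f_\alpha(x)=\int_x^1t^{-\alpha-1}\,dt=\alpha^{-1}(x^{-\alpha}-1)$ on $(0,1)$, one gets $(H^{*}-I)f_\alpha(x)=\tfrac{1-\alpha}{\alpha}x^{-\alpha}-\tfrac1\alpha$ there, and as $\alpha\uparrow 1/p$ the term $x^{-\alpha}$ dominates both norms, so $\|(H^{*}-I)f_\alpha\|_p/\|f_\alpha\|_p\to(1-1/p)/(1/p)=p-1$; thus $p-1$ can neither be raised in \eqref{U1} nor lowered in \eqref{U2}.

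\emph{Reduction and the endpoints.} The substitution $x=e^{-y}$, $F(y)=f(e^{-y})$, sends nonnegative nonincreasing $f$ to nonnegative nondecreasing $F$, with $\|f\|_p^p=\int_{\mathbb R}F(y)^pe^{-y}\,dy$ and $H^{*}f(e^{-y})=\int_{-\infty}^y F$. Writing $AF(y)=\int_{-\infty}^yF$ and $w(y)=e^{-y}$, the theorem is equivalent to: for all such $F$,
\[
\min(p-1,C_p^{1/p})\,\|F\|_{L^p(w)}\le\|AF-F\|_{L^p(w)}\le\max(p-1,C_p^{1/p})\,\|F\|_{L^p(w)},
\]
with $C_p^{1/p}=\|A\chi_{[0,\infty)}-\chi_{[0,\infty)}\|_{L^p(w)}$ (as $\|\chi_{[0,\infty)}\|_{L^p(w)}=1$). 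At $p=2$ this is an identity: since $F=(AF)'$, integration by parts gives $2\int(AF)F\,w=\int(AF)^2w$ (boundary terms vanish because $F$ is monotone and $F\in L^2(w)$), so $\|AF-F\|_{L^2(w)}=\|F\|_{L^2(w)}$, which, as $C_2=1$, is the theorem at $p=2$. At $p=1$ the lower bound is vacuous; for the upper bound I would write $f=\int_{(0,\infty)}\chi_{(0,s)}\,d\mu(s)$ with $\mu\ge0$ and use the triangle inequality and Fubini's theorem: $\|(H^{*}-I)f\|_1\le\int\|(H^{*}-I)\chi_{(0,s)}\|_1\,d\mu(s)=C_1\int s\,d\mu(s)=C_1\|f\|_1$. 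The upper bound in \eqref{U2} for $p\ge2$ is the distance computation of \cite{ABS} and may be quoted.

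\emph{The remaining range.} What is left is the upper bound in \eqref{U1} for $1<p<2$ (the conjecture of \cite{ABS}) and the lower bounds in \eqref{U1} and \eqref{U2} for $p\neq2$. The substance is that $AF-F$ genuinely changes sign: the representation $f=\int\chi_{(0,s)}\,d\mu(s)$ with Minkowski's inequality only gives $\|(H^{*}-I)f\|_p\le C_p^{1/p}\int s^{1/p}\,d\mu(s)$, and since $\int s^{1/p}\,d\mu(s)$ may be far larger than $\|f\|_p=\|\int\chi_{(0,s)}\,d\mu\|_p$, the cancellation in $\|f\|_p$ has to be exploited. For the upper bound my plan is to split $AF-F=P-N$, $P,N\ge0$, along the sign of the kernel $\ln(s/x)-1$ (that is, $s\gtrless ex$), which gives $P(y)=AF(y-1)$ and $0\le N(y)\le F(y)-F(y-1)$. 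The bound $|P-N|^p\le P^p+N^p$ is exact for a single $\chi_{(0,s)}$, where $P$ and $N$ have complementary support, but wasteful for power‑type $F$, where $P$ and $N$ overlap and cancel; so for $1<p<2$ one must retain the cross term, bounding $\int|P-N|^pw$ via a convexity inequality of the form $|a-b|^p\le|a|^p-p\,|a|^{p-1}\operatorname{sgn}(a)\,b+c_p|b|^p$ and then controlling $\int P^pw$, $\int P^{p-1}Nw$ and $\int N^pw$ by $\int F^pw$ using the monotonicity of $F$; it is probably cleanest to prove this first for step functions $F=\sum_ic_i\chi_{[a_i,\infty)}$ and induct on the number of steps, the one‑step case being the equality case. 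For the two lower bounds I would dualize, using $(H^{*}-I)^{*}=H-I$ and the known description of the dual of $f\mapsto\int f\psi$ on the cone of nonincreasing functions, which reduces the problem to sharp estimates for the operator $h\mapsto\bigl(s\mapsto\int_0^s(\ln(s/t)-1)h(t)\,dt\bigr)$ on unrestricted $h$, the extremal competitors there being power functions; a direct argument from the $P$–$N$ splitting may also work.

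\emph{Main obstacle.} I expect the hard step to be the $1<p<2$ upper bound. At $p=1$ the triangle inequality suffices and at $p=2$ there is an exact identity, but in between neither is available: the precise balance between the positive and negative parts of $H^{*}-I$, encoded exactly by the constant $C_p$, must be accounted for rather than merely estimated, and it is getting this balance right — not just bounded — that is the crux.
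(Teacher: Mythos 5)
Your sharpness analysis is correct and complete: $\chi_{(0,s)}$ is exactly the paper's extremizer for $C_p^{1/p}$, and your family $x^{-\alpha}\chi_{(0,1)}$ with $\alpha\uparrow 1/p$ is a clean (arguably simpler) substitute for the paper's test functions $f_q=H^*g_q$ of Lemma \ref{S}. The endpoints are also fine: $p=1$ by the layer-cake/triangle-inequality argument, $p=2$ by the integration-by-parts identity. The gap is that the heart of the theorem --- the $C_p^{1/p}$ upper bound for $1<p<2$, the $C_p^{1/p}$ lower bound for $p>2$, and the $(p-1)$ lower bound for $1<p<2$ --- is left at the level of a plan, and the plan as written does not close. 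For the $C_p$ bounds you propose a pointwise inequality of the shape $|a-b|^p\le|a|^p-p|a|^{p-1}\operatorname{sgn}(a)\,b+c_p|b|^p$ plus an induction on step functions, but you never identify the quantity that must telescope, nor verify that the loss at each step is exactly zero; since the constant $C_p$ must emerge with no slack from this bookkeeping, that verification \emph{is} the theorem. The paper's resolution is a specific convexity statement (Lemma \ref{tech}): with $h(r)=e^r\int_0^{e^{-r}}|1+\ln x|^p\,dx$, the map $t\mapsto h(t^{1/p}u)$ is convex on $[0,1]$ for $1<p\le2$ and concave for $p\ge2$; applied with $t=(b_{n+1}/b_n)^p$ on each level of a step function, it produces a sum whose bracketed terms telescope exactly to $C_p\|f\|_p^p$. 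Nothing in your sketch substitutes for this lemma, and its proof is not routine (two integrations by parts and a sign analysis of an auxiliary function via its second derivative). Note also that this one argument delivers both directions at once, so your separate dualization plan for the lower bound in \eqref{U2} is unnecessary as well as unsubstantiated.

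Separately, for the lower bound $(p-1)\|f\|_p\le\|(H^{*}-I)f\|_p$ on $1<p\le2$ you propose dualizing through the description of the dual of the cone of nonincreasing functions; this is speculative and unlikely to yield the sharp constant cleanly. The paper gets this inequality in one line: apply the \emph{unrestricted} bound $\|(H-I)g\|_p\le(p-1)^{-1}\|g\|_p$ of Ba\~nuelos--Janakiraman with $g=(H^{*}-I)f$ and use the identity $(H-I)(H^{*}-I)=I$. Your $p=2$ integration-by-parts identity is exactly the $p=2$ instance of this mechanism, so you were one step away from it. In summary: sharpness and endpoints are right, but the three central inequalities remain unproved.
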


The case $p=1$ of \eqref{U1} and the second inequality in \eqref{U2} are included here for the convenience of the reader. Both were proved in \cite{ABS}, including optimality of the constants $C_1=2/e$ and $p-1$. We do not reproduce the proofs here. The remaining three inequalities, in the case $p>1$, are proved in Theorems \ref{T} and \ref{cp} below.

Some consequences of the main results are given in Corollary \ref{sqrd}, which shows that the results of Theorem \ref{main} remain optimal on a smaller cone, and in Corollary \ref{H-I}, which gives optimal constants for inequalities relating $H-I$ and $H^*-I$ on the original cone of nonnegative, nonincreasing functions.

The final section of the paper is concerned with properties of the constant $C_p$ considered as a function of $p$.

\begin{remark} If $f$ is a nonnegative, measurable function on $(0,\infty)$ that takes the value $\infty$ on a set of measure zero, then $f$, $Hf$, $H^*f$, $(H-I)f$ and $(H^*-I)f$ are well-defined but may take the value $\infty$ on a set of positive measure. However, if $f$ is allowed to take the value $\infty$ on a set of positive measure, $(H-I)f$ and $(H^*-I)f$ may not be well-defined because $\infty-\infty$ may occur. This is why we must restrict the domain of the operator $(H^*-I)H^*$ encountered in Corollary \ref{sqrd}.
\end{remark}

\section{Main results}
We begin by introducing a family of test functions that will be used in Theorem \ref T and Corollaries \ref{sqrd} and \ref{H-I}.
\begin{lemma}\label{S} Let $p>1$, $g_q(x)=\frac1qx^{-1/q}\chi_{(1,\infty)}(x)$ and $f_q=H^*g_q$ for $1<q<p$. Then $g_q,f_q,f_q-g_q\in L^p$ are nonnegative, $f_q, f_q-g_q$ are nonincreasing,  
\begin{align}
\lim_{q\to p^-}\frac{\|(H^*-I)f_q\|_p}{\|f_q\|_p}=\lim_{q\to p^-}\frac{\|({H^*}^2-H^*)g_q\|_p}{\|H^*g\|_p}
&=p-1,\label{test1}\\
\intertext{and}
\lim_{q\to p^-}\frac{\|(H^*-I)(f_q-g_q)\|_p}{\|(H-I)(f_q-g_q)\|_p}=\lim_{q\to p^-}\frac{\|(H^*-I)^2g_q\|_p}{\|g_q\|_p}&=(p-1)^2\label{test2}.
\end{align}
\end{lemma}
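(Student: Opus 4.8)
The plan is to make every function explicit and then read off the limits by identifying which terms blow up as $q\to p^-$. First I would evaluate $f_q=H^*g_q$ by splitting the defining integral at $x=1$; this gives $f_q(x)=1$ for $0<x\le1$ and $f_q(x)=x^{-1/q}$ for $x>1$, which is visibly nonnegative and nonincreasing, and $\|f_q\|_p^p=1+\int_1^\infty x^{-p/q}\,dx=\frac{p}{p-q}<\infty$ precisely because $q<p$; the same integral shows $g_q\in L^p$. Since $g_q$ vanishes on $(0,1)$, one has $(f_q-g_q)(x)=1$ on $(0,1)$ and $(f_q-g_q)(x)=(1-\tfrac1q)x^{-1/q}$ on $(1,\infty)$, again nonnegative (as $q>1$), nonincreasing, and in $L^p$. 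That settles the non-limiting claims.

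For \eqref{test1}, the identity $f_q=H^*g_q$ immediately gives $(H^*-I)f_q=({H^*}^2-H^*)g_q$ and $\|f_q\|_p=\|H^*g_q\|_p$, so only the ratio requires work. Splitting again at $x=1$ produces
\[
 (H^*-I)f_q(x)=\begin{cases}(q-1)-\ln x,& 0<x\le1,\\ (q-1)x^{-1/q},& x>1,\end{cases}
\]
hence $\|(H^*-I)f_q\|_p^p=\int_0^1\big((q-1)-\ln x\big)^p\,dx+(q-1)^p\frac{q}{p-q}$. Dividing by $\|f_q\|_p^p=\frac{p}{p-q}$ and letting $q\to p^-$, the logarithmic integral stays bounded and is annihilated by the factor $\frac{p-q}{p}$, while the remaining term converges to $(p-1)^p$; taking $p$-th roots gives $p-1$.

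For \eqref{test2}, note first that $f_q-g_q=(H^*-I)g_q$, so $(H^*-I)(f_q-g_q)=(H^*-I)^2g_q$, which is the second equality. Next I would use the operator identity $HH^*=H+H^*$ (valid here since $g_q\ge0$, by Tonelli), which yields $(H-I)(f_q-g_q)=(H-I)(H^*-I)g_q=(HH^*-H-H^*+I)g_q=g_q$; thus the denominator in the first ratio is $\|g_q\|_p$ and the two ratios in \eqref{test2} are equal for each $q$. A final splitting at $x=1$ gives
\[
 (H^*-I)(f_q-g_q)(x)=\begin{cases}(q-2)-\ln x,& 0<x\le1,\\ \tfrac{(q-1)^2}{q}\,x^{-1/q},& x>1,\end{cases}
\]
so $\|(H^*-I)(f_q-g_q)\|_p^p=\int_0^1|(q-2)-\ln x|^p\,dx+\frac{(q-1)^{2p}}{q^{p-1}(p-q)}$ while $\|g_q\|_p^p=\frac{1}{q^{p-1}(p-q)}$. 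The quotient of $p$-th powers is $q^{p-1}(p-q)\int_0^1|(q-2)-\ln x|^p\,dx+(q-1)^{2p}$, which tends to $(p-1)^{2p}$ as $q\to p^-$; taking $p$-th roots gives $(p-1)^2$.

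There is no deep obstacle — the work is bookkeeping — but a few points need care. First, the membership $f_q,g_q,f_q-g_q\in L^p$ is exactly why one cannot use a single test function and must pass to the limit $q\to p^-$. Second, $(q-2)-\ln x$ is negative near $x=1$ when $q<2$, so the absolute value in $\|(H^*-I)(f_q-g_q)\|_p$ is genuinely needed; however it only affects the bounded logarithmic term, which disappears in the limit. Third, the continuity in $q$ of the integrals $\int_0^1|(q-1)-\ln x|^p\,dx$ and $\int_0^1|(q-2)-\ln x|^p\,dx$ as $q\to p^-$ should be justified by dominated convergence, with dominating function $(C+|\ln x|)^p$ for $q$ in a left neighborhood of $p$.
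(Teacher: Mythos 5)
Your proof is correct, but it takes a more computational route than the paper. The paper normalizes $g_q$ by $\varepsilon=q^{(p-1)/p}(p-q)^{1/p}$ and exploits the identity $(H^*-qI)g_q=\chi_{(0,1)}$ together with the boundedness of $H^*$ on $L^p$ to prove a single general statement, $\lim_{q\to p^-}\varepsilon\|U(H^*)g_q\|_p=|U(p)|$ for every polynomial $U$; in effect, $g_q$ is an approximate eigenvector of $H^*$ with eigenvalue $p$, and both \eqref{test1} and \eqref{test2} drop out by choosing $U(z)=z^2-z$, $U(z)=z$, $U(z)=(z-1)^2$ and $U(z)=1$. You instead compute every function and every $L^p$-norm in closed form and extract the limits by identifying the dominant term as $q\to p^-$; the underlying mechanism is the same (your formula $f_q=\chi_{(0,1)}+qg_q$ is exactly the paper's $(H^*-qI)g_q=\chi_{(0,1)}$), but your argument is self-contained and makes the asymptotics completely explicit, at the cost of being tied to the two particular polynomials needed, whereas the paper's version generalizes for free and avoids evaluating any integrals involving $\ln x$. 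Your verifications all check out: the formulas for $f_q$, $f_q-g_q$, $(H^*-I)f_q$ and $(H^*-I)(f_q-g_q)$ on $(0,1)$ and $(1,\infty)$ are right, $\|f_q\|_p^p=p/(p-q)$ and $\|g_q\|_p^p=q^{1-p}/(p-q)$ are right, the derivation of $(H-I)(H^*-I)=I$ from $HH^*=H+H^*$ via Tonelli is legitimate for these nonnegative functions, and your care with the sign of $(q-2)-\ln x$ and with the uniform bound $(C+|\ln x|)^p$ on the logarithmic integrals is exactly what is needed to justify that those terms are bounded and killed by the factor $p-q$.
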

\begin{proof} Observe that $0\le g_q\in L^p$ and set $\varepsilon=q^{(p-1)/p}(p-q)^{1/p}$ so that $\varepsilon\|g_q\|_p=1$. 

Fix a polynomial $U$. Let $V$ be the polynomial satisfying $V(z)z=U(z+p)-U(p)$. An easy calculation shows that $(H^*-qI)g_q=\chi_{(0,1)}$ and it follows that
$$
U(H^*)g_q-U(pI)g_q=V(H^*-pI)(H^*-pI)g_q=V(H^*-pI)(\chi_{(0,1)}+(q-p)g_q).
$$
Since $H^*$ is bounded on $L^p$, so is $V(H^*-pI)$. Let $N$ denote its operator norm. By the triangle inequality and the calculation $\|\chi_{(0,1)}\|_p=1$,
$$
\big|\|U(H^*)g_q\|_p-\|U(p)g_q\|\big| \le\|V(H^*-pI)(\chi_{(0,1)}+(q-p)g_q)\|_p
\le N(1+(p-q)\|g_q\|_p).
$$
Multiplying through by $\varepsilon$ and using $\varepsilon\|g_q\|_p=1$ yields
$$
\big|\varepsilon\|U(H^*)g_q\|_p-|U(p)|\big|
\le \varepsilon N+N(p-q)\to0
$$
as $q\to p^-$. (Clearly, $\varepsilon\to0$ as $q\to p^-$.) We conclude that 
\begin{equation}\label{poly}
\lim_{q\to p^-}\varepsilon\|U(H^*)g_q\|=|U(p)|.
\end{equation}
It is easy to see that $f_q=\chi_{(0,1)}+qg_q$ and $f_q-g_q=\chi_{(0,1)}+(q-1)g_q$ are nonnegative and nonincreasing. The definition of $f_q$ and the identity $(H-I)(H^*-I)=I$ prove the first equations in both \eqref{test1} and \eqref{test2}. Choose appropriate polynomials $U$
 in \eqref{poly} to verify the second equations in \eqref{test1} and \eqref{test2}. This completes the proof.
\end{proof}

A known result for $H-I$ is the key to proving the first inequality in \eqref{U1}.
\begin{theorem}
\label{T}
Let $1<p\leq 2$ and let $f$ be a nonnegative, nonincreasing function. Then
 \begin{equation}
\label{D1}
 (p-1)\|f\|_p \leq \|(H^{*}-I)f\|_p.
\end{equation}
The constant $p-1$ in \eqref{D1} is optimal.
\end{theorem}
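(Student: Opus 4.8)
The plan is to transfer the known lower bound for $H-I$ (the right-hand side of \eqref{K11}, read as a bound from below after inverting the roles) to $H^*-I$ via the algebraic identity $(H-I)(H^*-I)=I$ on the cone. Concretely, given a nonnegative, nonincreasing $f$, I would set $g=(H^*-I)f$ and try to write $f=(H-I)g$, so that an upper bound $\|(H-I)g\|_p\le c\|g\|_p$ for suitable functions $g$ would immediately give $\|f\|_p\le c\|(H^*-I)f\|_p$, i.e.\ \eqref{D1} with $p-1=c$. The catch is that $g=(H^*-I)f$ need not be nonincreasing, nor even nonnegative, so Theorem \ref{Kol2} does not apply to it directly. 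The first real step is therefore to understand the structure of $g$: since $f$ is nonincreasing, $H^*f(x)=\int_x^\infty f(t)\,dt/t$ is locally absolutely continuous with $(H^*f)'(x)=-f(x)/x$, and one computes $g(x)=(H^*-I)f(x)=H^*f(x)-f(x)$, whose derivative is $-f(x)/x-f'(x)$ (distributionally).

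The key step is an integration-by-parts / duality reformulation. I expect the cleanest route is to test against an arbitrary nonnegative $h\in L^{p'}$ and write
\[
\int_0^\infty f(x)h(x)\,dx
=\int_0^\infty (H-I)g(x)\,h(x)\,dx
=\int_0^\infty g(x)\,(H^*-I)h(x)\,dx,
\]
using the formal adjoint relation between $H-I$ and $H^*-I$, and then bound the right-hand side by $\|g\|_p\,\|(H^*-I)h\|_{p'}$. Since $2\le p'<\infty$ when $1<p\le 2$, the already-established second inequality in \eqref{U2} gives $\|(H^*-I)h\|_{p'}\le (p'-1)\|h\|_{p'}$ for nonnegative nonincreasing $h$; but $h$ ranges over all of $L^{p'}_+$, not just decreasing functions. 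This is the main obstacle, and I would resolve it by a rearrangement argument: replace $h$ by its decreasing rearrangement $h^*$. One needs $\int_0^\infty f h\le \int_0^\infty f h^*$ (true since $f$ is nonincreasing, by the Hardy–Littlewood inequality) together with a monotonicity of $\|(H^*-I)h\|_{p'}$ under rearrangement in the upward direction — or, more robustly, the fact that $\|g\|_p$-side estimate only needs the dual inequality on the cone. Since $(p'-1)=1/(p-1)$, combining the two displays yields $\int_0^\infty fh\le (p-1)^{-1}\|g\|_p\|h\|_{p'}$ for all nonnegative $h$, and taking the supremum over $h$ gives $\|f\|_p\le (p-1)^{-1}\|(H^*-I)f\|_p$, which is exactly \eqref{D1}.

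If the rearrangement step proves delicate (the operator $H^*-I$ does not interact with rearrangement as cleanly as $H-I$), the fallback is a direct argument: prove $\|(H-I)g\|_p\le (p-1)^{-1}\|g\|_p$ for the specific class of functions $g$ of the form $g=(H^*-I)f$ with $f$ nonincreasing. Such $g$ satisfy $xg'(x)+g(x)=-xf'(x)\ge 0$ in the distributional sense, i.e.\ $(xg(x))'\ge 0$, so $xg(x)$ is nondecreasing; functions with $xg(x)$ nondecreasing form a cone on which the $H-I$ bound can be checked directly, since on this cone $H-I$ has a sign-definite structure. I would carry out the estimate $\|(H-I)g\|_p\le(p-1)^{-1}\|g\|_p$ on this cone by the same convexity/Minkowski-integral-inequality technique used for the decreasing cone in \cite{Ko2}, exploiting that the kernel representation of $(H-I)g$ against the measure $d(xg(x))$ has a nonnegative kernel.

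Optimality of the constant $p-1$ requires no new work: it is delivered by the test functions of Lemma \ref{S}. Indeed, the first limit in \eqref{test1} states precisely that $\|(H^*-I)f_q\|_p/\|f_q\|_p\to p-1$ as $q\to p^-$, with each $f_q$ nonnegative and nonincreasing, so no constant smaller than $p-1$ can work in \eqref{D1}. This completes the plan.
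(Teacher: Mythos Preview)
Your core idea---set $g=(H^*-I)f$ and use $(H-I)(H^*-I)=I$---is exactly the paper's. What you are missing is that the ``catch'' you identify is illusory: the inequality
\[
\|(H-I)g\|_p\le (p-1)^{-1}\|g\|_p,\qquad 1<p\le 2,
\]
is known to hold for \emph{every} $g\in L^p$, with no sign or monotonicity assumption. This is the Ba\~nuelos--Janakiraman bound for the Beurling--Ahlfors transform restricted to radial functions (see also Strzelecki and Sinnamon). With that fact in hand the proof is one line: if $(H^*-I)f\in L^p$ then $f=(H-I)(H^*-I)f$ and $(p-1)\|f\|_p\le\|(H^*-I)f\|_p$; the case $(H^*-I)f\notin L^p$ is trivial. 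Your optimality argument via Lemma~\ref{S} matches the paper exactly.

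Your duality workaround is in fact sound and does not need any rearrangement monotonicity for $H^*-I$: since $f$ is nonnegative and nonincreasing, the extremizer in $\|f\|_p=\sup_{\|h\|_{p'}=1}\int fh$ is $h=f^{p-1}/\|f\|_p^{p-1}$, which is already nonincreasing, so testing only against nonincreasing $h\ge 0$ suffices. Then the adjoint identity and the \emph{already available} upper bound $\|(H^*-I)h\|_{p'}\le (p'-1)\|h\|_{p'}$ for $p'\ge 2$ on the cone give the result. This route is correct but circuitous; it is really the dual formulation of the same unrestricted $(H-I)$ bound.

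Your fallback, however, contains an error. With $g=H^*f-f$ one has $g'=-f/x-f'$, hence
\[
(xg)'=g+xg'=H^*f-f-f-xf'=H^*f-2f-xf',
\]
not $-xf'$ as you wrote. So $xg$ need not be nondecreasing, and the cone you propose to work on is not the one that actually arises.
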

\begin{proof} The inequality
\begin{equation}
\label{A35}
\|(H-I)g\|_p \leq (p-1)^{-1}\|g\|_p,\quad g\in L^p,
\end{equation}
appears as \cite[Theorem 4.1]{Ba}. For other proofs of \eqref{A35}, see \cite{St} and those cited in its introduction. An elementary proof appears in \cite[Corollary 5]{Si}. 

If $(H^*-I)f\notin L^p$, inequality \eqref{D1} holds trivially. If $(H^*-I)f\in L^p$, then inequality \eqref{A35}, with $g=(H^*-I)f$, and the identity $(H-I)(H^*-I)=I$ show that $f\in L^p$ and
\begin{equation*}
(p-1)\|f\|_p = (p-1)\|(H-I)(H^*-I)f\|_p \leq \|(H^*-I)f\|_p
\end{equation*}
as required.

Equation \eqref{test1} of Lemma \ref{S} shows that if $p-1$ were replaced by a larger constant, then inequality \eqref{D1} would fail for some nonnegative, nonincreasing function $f_q$. So $p-1$ is optimal in \eqref{D1}.
\end{proof}

The second inequality in \eqref{U1} and the first inequality in \eqref{U2} will be handled together. The next lemma isolates a technical portion of the proof of Theorem \ref{cp}. 

We use the differentiation formula 
$\tfrac d{dz}|z|^q=q|z|^{q-2}z$ in the proof. It is valid for all nonzero $z$ and all real $q$. Note that if $q>1$, the right-hand side extends continuously to $z=0$.

\begin{lemma}\label{tech} Let $u>0$ and $0\le t\le 1$. For $r\ge0$, set 
$$
h(r)=e^r\int_0^{e^{-r}}|1+\ln x|^p\,dx.
$$
If $1<p\le 2$, then 
$$
h(t^{1/p}u)\le (1-t)h(0)+th(u).
$$
If $p>2$, then
$$
h(t^{1/p}u)\ge (1-t)h(0)+th(u).
$$
\end{lemma}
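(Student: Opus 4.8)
The plan is to reduce the stated inequalities to the convexity of $\psi(t):=h(t^{1/p}u)$ on $[0,1]$ when $1<p\le2$, and to its concavity when $p\ge2$: since $\psi(0)=h(0)$ and $\psi(1)=h(u)$, a convex $\psi$ lies on or below, and a concave $\psi$ on or above, the chord joining these endpoints, which is exactly what is asserted. First I would rewrite $h$ in two ways. The substitutions $x=e^{-r}y$ and then $y=e^{-\sigma}$ give
\[
h(r)=\int_0^\infty|\sigma+r-1|^p\,e^{-\sigma}\,d\sigma ,
\]
so in particular $h(0)=C_p$; and from $h(r)=e^rF(e^{-r})$ with $F(a)=\int_0^a|1+\ln x|^p\,dx$ one differentiates to obtain $h'(r)=h(r)-|1-r|^p$, after which the differentiation formula recorded before the lemma shows that $h\in C^2[0,\infty)$. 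Consequently $\psi$ is continuous on $[0,1]$ and $C^2$ on $(0,1)$, and it is enough to prove that $\psi''\ge0$ on $(0,1)$ when $1<p\le2$ and $\psi''\le0$ there when $p\ge2$.

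Set $c=c(t)=t^{1/p}u$, so $c'=\tfrac up\,t^{1/p-1}$ and $c''=\tfrac{u(1-p)}{p^2}\,t^{1/p-2}$; one checks $(c')^2=\tfrac c{1-p}\,c''$. Differentiating $\psi(t)=\int_0^\infty|\sigma+c-1|^pe^{-\sigma}\,d\sigma$ twice under the integral sign (using the quoted formula, which also governs the behaviour at $\sigma=1-c$) and applying this relation merges the two resulting terms:
\[
\psi''(t)=p\,c''\int_0^\infty(\sigma-1)\,|\sigma+c-1|^{p-2}e^{-\sigma}\,d\sigma=:p\,c''\,J(c).
\]
Since $p>1$ makes $c''<0$, the sign of $\psi''(t)$ is opposite to that of $J(c)$, with $c$ ranging over $(0,u)$. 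Thus the lemma reduces to: $J(c)\le0$ for all $c>0$ when $1<p<2$, and $J(c)\ge0$ for all $c>0$ when $p>2$ (when $p=2$, $J\equiv0$, $\psi$ is affine, and the lemma holds with equality).

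To treat $J$, substitute $z=\sigma+c-1$ and subtract the vanishing quantity $c^{p-2}\int_{c-1}^\infty(z-c)e^{c-1-z}\,dz$ (which is zero, since $\int_{c-1}^\infty e^{c-1-z}\,dz=1$ and $\int_{c-1}^\infty z\,e^{c-1-z}\,dz=c$), obtaining
\[
J(c)=\int_{c-1}^\infty(z-c)\bigl(|z|^{p-2}-c^{p-2}\bigr)\,e^{c-1-z}\,dz .
\]
If $c\ge\tfrac12$, the range of integration lies in $[-c,\infty)$, so within it $|z|\le c$ holds exactly for $z\le c$; hence for $1<p<2$ the integrand is $\le0$ both on $z\le c$ (where $|z|^{p-2}\ge c^{p-2}$, $z-c\le0$) and on $z\ge c$ (where $|z|^{p-2}\le c^{p-2}$, $z-c\ge0$), giving $J(c)\le0$, and the signs reverse for $p>2$. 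If $0<c<\tfrac12$, then $z=0$ is interior to the range and $|z|^{p-2}$ is no longer monotone there; in this case I would instead split $J(c)$ at $\sigma=1-c$ and substitute to reach the closed form
\[
J(c)=e^{c-1}\Bigl(\Gamma(p-1)(p-1-c)-\int_0^{1-c}(c+z)\,z^{p-2}e^{z}\,dz\Bigr).
\]
When $1<p<2$ and $c\ge p-1$ the first term is $\le0$ and $J(c)\le0$ immediately; the cases that remain are $0<c<\min\{\tfrac12,\,p-1\}$, for which everything rests on
\[
\Gamma(p-1)(p-1-c)\le\int_0^{1-c}(c+z)\,z^{p-2}e^{z}\,dz\qquad(1<p<2),
\]
and on the reverse inequality when $p>2$.

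This last inequality is the crux, and the place where I expect the real work. It degenerates to an equality as $p\to2$ (both sides tend to $1-c$), so any estimate that throws away part of the exponential, like $e^z\ge1$, is too lossy. I would prove it by fixing $c$ and studying the difference of the two sides as a function of $p$: it vanishes at $p=2$, and one would control its derivative there along with a sign or monotonicity property in $p$. Alternatively, expand $e^z$ and $z^{p-2}=e^{(p-2)\ln z}$ in power series, integrate term by term, and compare with the corresponding series for $\Gamma(p-1)$, grouping terms so that the cancellation at $p=2$ is explicit. Once this inequality is in hand, $J$ has the required sign, $\psi$ is convex (resp.\ concave) on $[0,1]$, and the lemma follows.
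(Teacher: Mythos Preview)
Your reduction is correct and coincides with the paper's: writing $\psi(t)=h(t^{1/p}u)$ and showing $\psi''(t)=p\,c''(t)\,J(c)$, so that everything rests on the sign of
\[
J(c)=\int_0^\infty(\sigma-1)\,|\sigma+c-1|^{p-2}e^{-\sigma}\,d\sigma,\qquad c>0.
\]
This is exactly the paper's function $g$ in disguise: with the substitution $y=\sigma+s$ one checks that the paper's
\[
g(s)=s\int_s^\infty|y-1|^{p-2}e^{-y}\,dy-\int_s^\infty|y-1|^{p-2}(y-1)e^{-y}\,dy
\]
satisfies $g(s)=-e^{-s}J(s)$, so the required sign statements are equivalent.

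The gap is in your treatment of $0<c<\tfrac12$. Your pointwise sign argument for $c\ge\tfrac12$ is clean, but for small $c$ you only arrive at the target inequality
\[
\Gamma(p-1)(p-1-c)\ \le\ \int_0^{1-c}(c+z)z^{p-2}e^{z}\,dz\qquad(1<p<2),
\]
and its reverse for $p>2$, together with two proposed lines of attack (differentiate in $p$; expand in series). Neither is carried out, and neither is routine: as you note, the two sides agree at $p=2$, so crude bounds fail, and the series route leads to delicate sign cancellations. As written, the proof is incomplete precisely here.

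The paper avoids this case split entirely by differentiating $g$ rather than estimating it. For $s\ne1$ one has
\[
g'(s)=\int_s^\infty|y-1|^{p-2}e^{-y}\,dy-|s-1|^{p-2}e^{-s},\qquad
g''(s)=(p-2)(1-s)|s-1|^{p-4}e^{-s},
\]
so for $1<p<2$ the function $g$ is concave on $(0,1)$ and convex on $(1,\infty)$, with the roles reversed for $p>2$. Since $g(s)\to0$ as $s\to\infty$, convexity on $(1,\infty)$ forces $g$ to decrease to $0$ there, hence $g>0$ on $[1,\infty)$ when $1<p<2$; then concavity on $(0,1)$ together with the endpoint values $g(1)>0$ and
\[
g(0)=\tfrac1p\bigl(1-h(0)\bigr)=\tfrac1p(1-C_p)\ge0
\]
(the last by $C_p^{1/p}\le C_2^{1/2}=1$ from H\"older) gives $g\ge0$ on $(0,1)$ as well. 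The case $p>2$ is symmetric. This second-derivative argument handles all $c>0$ at once and replaces your unproved gamma-function inequality; it is the missing idea.
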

\begin{proof} To cover both cases, we write ``$\le^*$'' to mean ``$\le$'' when $1<p\le 2$ and to mean ``$\ge$'' when $p\ge2$. 

We will use $h(r)$ in a different form. Substitute $x=e^{-y}$ to obtain 
$$
h(r)=e^r\int_r^\infty|y-1|^pe^{-y}\,dy.
$$

Now fix $u>0$ and let $r(t)=t^{1/p}u$. We will write $r$, $r'$, and $r''$ instead of $r(t)$, $r'(t)$, and $r''(t)$. Observe that $r''r=(1-p)(r')^2$.

Since 
$$
\frac d{dy}(|y-1|^pe^{-y})=p|y-1|^{p-2}(y-1)e^{-y}-|y-1|^pe^{-y}
$$
is continuous, we may integrate both sides and simplify to get
\begin{equation}
\label{Id1}
pe^r\int_r^\infty|y-1|^{p-2}(y-1)e^{-y}\,dy=h(r)-|r-1|^p.
\end{equation}
Also, since $|y-1|^{p-2}(y-1)e^{-y}$ extends to be continuous at $y=1$ and its derivative,
\begin{align*}
\frac d{dy}&(|y-1|^{p-2}(y-1)e^{-y})\\
&=(p-2)|y-1|^{p-4}(y-1)^2e^{-y}+|y-1|^{p-2}e^{-y}-|y-1|^{p-2}(y-1)e^{-y}\\
&=(p-1)|y-1|^{p-2}e^{-y}-|y-1|^{p-2}(y-1)e^{-y},
\end{align*}
is continuous except at $y=1$ we may integrate both sides and simplify to get
\begin{equation}
\label{Id2}
(p-1)e^r\int_r^\infty|y-1|^{p-2}e^{-y}\,dy
=e^r\int_r^\infty|y-1|^{p-2}(y-1)e^{-y}\,dy-|r-1|^{p-2}(r-1).
\end{equation}
Next, we compute two derivatives of $h(r)$ with respect to $t$. By \eqref{Id1},
$$
\frac d{dt}h(r)=(h(r)-|r-1|^p)r'=\Big(pe^r\int_r^\infty|y-1|^{p-2}(y-1)e^{-y}\,dy\Big)r'.
$$
By \eqref{Id2}, and the observation $r''r=(1-p)(r')^2$, 
\begin{align*}
\frac {d^2}{dt^2}h(r)&=p\Big(e^r\int_r^\infty|y-1|^{p-2}(y-1)e^{-y}\,dy-|r-1|^{p-2}(r-1)\Big)(r')^2\\
&+\Big(pe^r\int_r^\infty|y-1|^{p-2}(y-1)e^{-y}\,dy\Big)r''\\
&=\frac pr(p-1)e^r(r')^2\Big(r\int_r^\infty|y-1|^{p-2}e^{-y}\,dy-\int_r^\infty|y-1|^{p-2}(y-1)e^{-y}\,dy\Big).
\end{align*}

Now let
$$
g(s)=s\int_s^\infty|y-1|^{p-2}e^{-y}\,dy-\int_s^\infty|y-1|^{p-2}(y-1)e^{-y}\,dy.
$$
Taking $r=0$ in \eqref{Id1} shows
$$
g(0)=-\int_0^\infty|y-1|^{p-1}(y-1)e^{-y}\,dy=\tfrac1p(1-h(0)). 
$$
But H\"older's inequality and a bit of calculus shows that 
$$
h(0)^{1/p}=\Big(\int_0^1|1+\ln x|^p\,dx\Big)^{1/p}\le^*\!\Big(\int_0^1|1+\ln x|^2\,dx\Big)^{1/2}=1,
$$
so $0\le^*\!g(0)$. In particular, if $p=2$, then $g(0)=0$.

Clearly, 
$$
\lim_{s\to\infty}\int_s^\infty|y-1|^{p-2}(y-1)e^{-y}\,dy=0.
$$
Also, by l'Hospital's rule
$$
\lim_{s\to\infty}\frac{\int_s^\infty|y-1|^{p-2}e^{-y}\,dy}{s^{-1}}
=\lim_{s\to\infty}\frac{-|s-1|^{p-2}e^{-s}}{-s^{-2}}=0.
$$
Thus, $g(s)\to0$ as $s\to\infty$. 

If $s\ne1$, 
$$
g'(s)=\int_s^\infty|y-1|^{p-2}e^{-y}\,dy-|s-1|^{p-2}e^{-s}\ \ \text{and}\ \ 
g''(s)=(p-2)(1-s)|s-1|^{p-4}e^{-s}.
$$

If $p=2$, $g'(s)=0$ and $g(s)=g(0)=0$. If $1<p<2$, $g$ is concave on $(0,1)$ and convex on $(1,\infty)$. It follows that $g$ decreases to zero on $(1,\infty)$, that $g(1)>0$, that $g$ has no local minimum in $(0,1)$ and that $g\ge0$ on $(0,1)$. If $p>2$, $g$ is convex on $(0,1)$ and concave on $(1,\infty)$. It follows that $g$ increases to zero on $(1,\infty)$, that $g(1)<0$, that $g$ has no local maximum on $(0,1)$ and that $g\le0$ on $(0,1)$. We have shown that $0\le^*\!g(s)$ for $0<s<\infty$ and, taking $s=r$, that $0\le^*\frac {d^2}{dt^2}h(r)$. 

We conclude that $h(r)$ is a convex function of $t$ on $[0,1]$ when $1<p\le2$ and $h(r)$ is a concave function of $t$ on $[0,1]$ when $p\ge2$. When $t=0$, $r=0$ and when $t=1$, $r=u$. Thus, for any $t\in [0,1]$ we have $h(t^{1/p}u)=h(r)\le^*\!(1-t)h(0)+th(u)$ as required.
\end{proof}

\begin{theorem}\label{cp} Let $f$ be a nonnegative, nonincreasing function. If $1<p\le 2$, then
$$
\|(H^*-I)f\|_p^p\le C_p\|f\|_p^p.
$$
If $p\ge2$, then
$$
\|(H^*-I)f\|_p^p\ge C_p\|f\|_p^p.
$$
In both cases, $C_p$ is the optimal constant for which the inequality holds.
\end{theorem}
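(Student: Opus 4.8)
The optimality of $C_p$ is immediate and I would dispose of it first: for $f=\chi_{(0,1)}$ one has $\|(H^{*}-I)f\|_p^p=\int_0^1|1+\ln x|^p\,dx=C_p=C_p\|f\|_p^p$, so no smaller constant works in the upper inequality and no larger one in the lower inequality. To prove the two inequalities themselves, I would abbreviate ``$\le^{*}$'' as in the proof of Lemma \ref{tech}, so that the goal is $\|(H^{*}-I)f\|_p^p\le^{*}C_p\|f\|_p^p$ for every nonnegative, nonincreasing $f$. The first reduction is to the case that $f$ is a nonincreasing step function of bounded support, $f=\sum_{k=1}^{n}c_k\chi_{(0,s_k)}$ with $c_k>0$ and $s_1>\dots>s_n>0$: every nonincreasing, finite\nobreakdash-a.e.\ $f$ is an increasing pointwise limit of such functions $f_j$, and then $(H^{*}-I)f_j\to(H^{*}-I)f$ a.e.\ while $\|f_j\|_p^p\uparrow\|f\|_p^p$. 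When $1<p\le2$ the inequality for $f$ follows from those for the $f_j$ by Fatou's lemma; when $p\ge2$ and $\|f\|_p<\infty$, the function $H^{*}f+f\in L^p$ dominates every $|(H^{*}-I)f_j|$, so dominated convergence applies; and if $\|f\|_p=\infty$ then $\|(H^{*}-I)f\|_p=\infty$ as well, since otherwise \eqref{A35} applied to $g=(H^{*}-I)f$ together with $(H-I)(H^{*}-I)=I$ would give $f=(H-I)g\in L^p$.

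For a step function as above, set $C_j=c_1+\dots+c_j$ and $S_j=\sum_{k\le j}c_k\ln s_k$. Computing $H^{*}f$ directly gives, for $x\in(s_{j+1},s_j)$ with $s_{n+1}:=0$,
$$
(H^{*}-I)f(x)=S_j-C_j-C_j\ln x=-C_j\bigl(1+\ln(xe^{-S_j/C_j})\bigr).
$$
The substitution $v=xe^{-S_j/C_j}$ and the identity $\int_0^{e^{-r}}|1+\ln x|^p\,dx=e^{-r}h(r)$ from Lemma \ref{tech} then yield
$$
\int_0^{s_n}|S_j-C_j-C_j\ln x|^p\,dx=C_j^{p}s_nh(\gamma_j),\qquad \gamma_j=\frac1{C_j}\sum_{k=1}^{j}c_k\ln\frac{s_k}{s_n}\ge0.
$$
In particular $\int_0^{s_n}|(H^{*}-I)f|^p\,dx=C_n^{p}s_nh(\gamma_n)$; and for the truncation $\tilde f=\sum_{k\le n-1}c_k\chi_{(0,s_k)}$ one has $(H^{*}-I)\tilde f(x)=S_{n-1}-C_{n-1}-C_{n-1}\ln x$ on $(0,s_n)$, so the same computation gives $\int_0^{s_n}|(H^{*}-I)\tilde f|^p\,dx=C_{n-1}^{p}s_nh(\gamma_{n-1})$.

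I would finish by induction on $n$. For $n=1$, the substitution $x=s_1y$ gives $\|(H^{*}-I)f\|_p^p=c_1^{p}s_1\int_0^1|1+\ln y|^p\,dy=C_p\|f\|_p^p$, with equality. For $n\ge2$, since $(H^{*}-I)\chi_{(0,s_n)}$ vanishes on $(s_n,\infty)$ we have $(H^{*}-I)f=(H^{*}-I)\tilde f$ there, whence
$$
\|(H^{*}-I)f\|_p^p=\|(H^{*}-I)\tilde f\|_p^p+C_n^{p}s_nh(\gamma_n)-C_{n-1}^{p}s_nh(\gamma_{n-1}).
$$
Using $\|\tilde f\|_p^p=\|f\|_p^p-(C_n^{p}-C_{n-1}^{p})s_n$ together with the inductive hypothesis $\|(H^{*}-I)\tilde f\|_p^p\le^{*}C_p\|\tilde f\|_p^p$ reduces the claim to $C_n^{p}h(\gamma_n)-C_{n-1}^{p}h(\gamma_{n-1})\le^{*}C_p(C_n^{p}-C_{n-1}^{p})$. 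The crucial point is that $C_n\gamma_n=\sum_{k\le n}c_k\ln(s_k/s_n)=\sum_{k\le n-1}c_k\ln(s_k/s_n)=C_{n-1}\gamma_{n-1}$, so with $t=(C_{n-1}/C_n)^{p}\in(0,1)$ and $u=\gamma_{n-1}$ we have $\gamma_n=t^{1/p}u$. Lemma \ref{tech} then gives $h(\gamma_n)\le^{*}(1-t)h(0)+th(\gamma_{n-1})$, that is $h(\gamma_n)\le^{*}(1-t)C_p+th(\gamma_{n-1})$ since $h(0)=C_p$, and multiplying by $C_n^{p}$ and using $C_n^{p}t=C_{n-1}^{p}$ is exactly the required inequality.

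The main obstacle is finding the right decomposition. Peeling off the narrowest step $c_n\chi_{(0,s_n)}$ is the correct move because the only ``new'' contribution to $\|(H^{*}-I)f\|_p^p$ comes from the interval $(0,s_n)$, and---this is the heart of the matter---the two values of $h$ produced there, $h(\gamma_n)$ and $h(\gamma_{n-1})$, have arguments in precisely the ratio $C_{n-1}/C_n$ needed to invoke the convexity (respectively concavity) estimate of Lemma \ref{tech}. Verifying the identity $C_n\gamma_n=C_{n-1}\gamma_{n-1}$ and exercising the needed care in the limiting argument (especially the $\|f\|_p=\infty$ case) are the remaining technical points; everything else is bookkeeping.
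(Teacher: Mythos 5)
Your proof is correct and is essentially the paper's argument in different notation: the paper likewise reduces to nonincreasing step functions, writes the contribution of each piece in terms of the function $h$ of Lemma \ref{tech}, applies that lemma with $t$ equal to the $p$-th power of the ratio of successive function values (your identity $C_n\gamma_n=C_{n-1}\gamma_{n-1}$ plays exactly the role of the paper's $b_n(S_n-d_n)=b_{n+1}(S_{n+1}-d_n)$), and telescopes, so your induction on the number of layers is just that telescoping sum unwound, and the optimality argument via $\chi_{(0,1)}$ is identical. The only substantive difference is that you spell out the approximation step (Fatou/dominated convergence and the $\|f\|_p=\infty$ case) more carefully than the paper, which simply invokes boundedness of $H^*-I$ on $L^p$.
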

\begin{proof} To cover both cases, we write ``$\le^*$'' to mean ``$\le$'' when $1<p\le 2$ and to mean ``$\ge$'' when $p\ge2$. Taking $f=\chi_{(0,1)}$ reduces both inequalities above to equality, which will show that the constant $C_p$ is optimal once we prove that both inequalities hold. 
As in Lemma \ref{tech}, let 
$$
h(r)=e^r\int_0^{e^{-r}}|1+\ln x|^p\,dx.
$$
Observe that $h(0)=C_p$.

As mentioned earlier, the operator $H^*-I$ is bounded above and below on $L^p$. Thus, it suffices to prove the theorem for simple functions $f$ of the following form: Let $a_0=0<a_1<\dots<a_N<\infty$, $b_1>\dots>b_N>0=b_{N+1}$ and set $d_n=\ln a_n$ for $n=0,\dots,N$, so $d_0=-\infty<d_1<\dots<d_N$. Define
$$
f=\sum_{n=1}^N b_n\chi_{(a_{n-1},a_n]}.
$$
For convenience, set
$$
A_n=\int_{a_{n-1}}^{a_n}\bigg|\int_x^\infty f(t)\,\frac{dt}t-f(x)\bigg|^p\,dx, \quad n=1,2,\dots,N. 
$$
Since $(H^*-I)f$ is supported on $(0,a_N]$,
$$
\|(H^*-I)f\|_p^p=\sum_{n=1}^N A_n.
$$
If $a_{n-1}<x\le a_n$, then 
\begin{align*}
\int_x^\infty f(t)\,\frac{dt}t-f(x)
&=\int_x^{a_n}b_n\,\frac{dt}t
+\int_{a_n}^{a_{n+1}}b_{n+1}\,\frac{dt}t+\dots+\int_{a_{N-1}}^{a_N}b_N\,\frac{dt}t-b_n\\
&=b_n(d_n-\ln x)+b_{n+1}(d_{n+1}-d_n)+\dots+b_N(d_N-d_{N-1})-b_n\\
&=b_n(S_n-1-\ln x),
\end{align*}
where $S_n$ is chosen so that 
$$
b_nS_n=b_nd_n+\sum_{k=n+1}^N b_k(d_k-d_{k-1}).
$$
Observe that $b_n(S_n-d_n)=b_{n+1}(S_{n+1}-d_n)$ for $n=1,\dots, N-1$ and $b_N(S_N-d_N)=0$. Let $\ln z=\ln x-S_n$ to get
\begin{align*}
\frac{A_n}{b_n^p}=\int_{a_{n-1}}^{a_n} |S_n-1-\ln x|^p\,dx&=e^{S_n}\int_{e^{d_{n-1}-S_n}}^{e^{d_n-S_n}}|1+\ln z|^p\,dz\\
&=a_nh(S_n-d_n)-a_{n-1}h(S_n-d_{n-1})\\
&=a_nh((b_{n+1}/b_n)(S_{n+1}-d_n))-a_{n-1}h(S_n-d_{n-1}).
\end{align*}
Taking $t=(b_{n+1}/b_n)^p$ and $u=S_{n+1}-d_n$ in Lemma \ref{tech}, we have
$$
a_nh((b_{n+1}/b_n)(S_{n+1}-d_n))
\le^*\! a_n(1-(b_{n+1}/b_n)^p)C_p+a_n(b_{n+1}/b_n)^ph(S_{n+1}-d_n).
$$
Multiplying through by $b_n^p$, we get
$$
A_n\le^*\! a_n(b_n^p-b_{n+1}^p)C_p+[b_{n+1}^pa_nh(S_{n+1}-d_n)-b_n^pa_{n-1}h(S_n-d_{n-1})].
$$
When we sum over $n$, the bracketed terms telescope to zero, so
$$
\sum_{n=1}^NA_n\le^*\! C_p\sum_{n=1}^Na_n(b_n^p-b_{n+1}^p)=C_p\sum_{n=1}^Nb_n^p(a_n-a_{n-1})=C_p\int_0^\infty f(t)^p\,dt=C_p\|f\|_p^p.
$$
This completes the proof.
\end{proof}

Let $g$ be a nonnegative, measurable function on $(0,\infty)$. Then
$$
{H^*}^2g(x)=H^*(H^*g)(x)= \int_x^\infty \ln (s/x)g(s)\,\frac{ds}s.
$$
The following result shows that if $1 < p < \infty$, then for all such $g\ge0$ for which $({H^*}^2-H^*)g$ is defined, the $L^p$-norms of $({H^*}^2-H^*)g$ and $H^*g$ are equivalent. 
\begin{corollary}\label{sqrd}
Let $1<p<\infty$ and let $g$ be a nonnegative, measurable function such that $H^*g(x)<\infty$ for all $x>0$. Then,
\begin{align}
\label{A1}
(p-1)\|H^*g\|_p &\leq\|({H^*}^2-H^*)g\|_p\leq C_p^{1/p}\|H^*g\|_p\\
\intertext{if $1<p\leq 2$, and}
\label{A2}
C_p^{1/p}\|H^*g\|_p&\leq\|({H^*}^2-H^*)g\|_p\leq (p-1)\|H^*g\|_p
\end{align}
if $2 \leq p <\infty$. The constants $p-1$ and $C_p^{1/p}$ are optimal in both \eqref{A1} and \eqref{A2}.
\end{corollary}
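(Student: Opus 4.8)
The plan is to reduce the whole statement to Theorem~\ref{main}. Put $f=H^*g$. Then $f\ge0$ since $g\ge0$, and $f$ is nonincreasing since $x\mapsto\int_x^\infty g(t)\,dt/t$ is nonincreasing; the hypothesis $H^*g(x)<\infty$ for all $x>0$ says exactly that $f$ is finite on $(0,\infty)$, so $(H^*-I)f=H^*f-f={H^*}^2g-H^*g$ is well defined (no $\infty-\infty$ arises, because $f<\infty$; the value $+\infty$ is allowed where $H^*f=\infty$). With this identification, \eqref{A1} and \eqref{A2} become precisely \eqref{U1} and \eqref{U2} of Theorem~\ref{main} for the nonnegative, nonincreasing function $f$, and these remain valid even when $\|f\|_p=\infty$. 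That disposes of the inequalities.

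For optimality one cannot simply invoke Theorem~\ref{main}, because the functions $f=H^*g$ form a proper subclass of the nonnegative, nonincreasing functions (they are continuous on $(0,\infty)$ and vanish at $\infty$), so the extremizing functions must be taken from within this subclass. For the constant $p-1$ I would use the family $g_q$ of Lemma~\ref{S}: since $H^*g_q=f_q=\chi_{(0,1)}+qg_q$ is finite on $(0,\infty)$, each $g_q$ is admissible, and the second equality in \eqref{test1} gives $\|({H^*}^2-H^*)g_q\|_p/\|H^*g_q\|_p\to p-1$ as $q\to p^-$. This shows $p-1$ cannot be enlarged in the left inequality of \eqref{A1} nor lowered in the right inequality of \eqref{A2}.

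For the constant $C_p^{1/p}$ I would use the approximate identities $g_\delta=\delta^{-1}\chi_{(1,e^\delta)}$, $0<\delta\le1$. A short computation gives $H^*g_\delta=\chi_{(0,1]}+(1-\delta^{-1}\ln x)\chi_{(1,e^\delta]}$, which is finite everywhere, so $g_\delta$ is admissible, and $\|H^*g_\delta-\chi_{(0,1)}\|_p^p\le e^\delta-1\to0$. Since $H^*-I$ is bounded on $L^p$, it follows that $({H^*}^2-H^*)g_\delta=(H^*-I)(H^*g_\delta)\to(H^*-I)\chi_{(0,1)}$ in $L^p$, and hence
$$
\frac{\|({H^*}^2-H^*)g_\delta\|_p}{\|H^*g_\delta\|_p}\longrightarrow\frac{\|(H^*-I)\chi_{(0,1)}\|_p}{\|\chi_{(0,1)}\|_p}=C_p^{1/p}
$$
by \eqref{const}. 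Thus $C_p^{1/p}$ cannot be lowered in the right inequality of \eqref{A1} nor enlarged in the left inequality of \eqref{A2}, and the proof is complete.

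The argument has essentially no hard step: all the analytic weight is carried by Theorem~\ref{main} and Lemma~\ref{S}. The only points needing care are the verifications that both test families meet the finiteness requirement $H^*g<\infty$, and the $L^p$-convergence $H^*g_\delta\to\chi_{(0,1)}$, which reduces to the one-line bound $\|H^*g_\delta-\chi_{(0,1)}\|_p^p\le e^\delta-1$.
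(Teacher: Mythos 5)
Your proof is correct and follows essentially the same route as the paper: reduce to Theorem~\ref{main} via $f=H^*g$, invoke $g_q$ from Lemma~\ref{S} for optimality of $p-1$, and approximate $\chi_{(0,1)}$ by $H^*$ of a compactly supported family to handle $C_p^{1/p}$. The only difference is cosmetic: the paper uses $k_\varepsilon(s)=(s/\varepsilon)\chi_{(1-\varepsilon,1)}(s)$ supported just below $1$, while you use $\delta^{-1}\chi_{(1,e^\delta)}$ supported just above $1$; both give $H^*$ images converging to $\chi_{(0,1)}$ in $L^p$, after which continuity of $H^*-I$ finishes the argument identically.
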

\begin{proof}
Set $f=H^*g$ and observe that $f$ is nonnegative and nonincreasing. Since $({H^*}^2-H^*)g=(H^*-I)f$, Theorem \ref{main} shows that inequalities \eqref{A1} and \eqref{A2} both hold.

Equation \eqref{test1} of Lemma \ref{S} shows that if $p-1$ were replaced by larger constant in inequality \eqref{A1} or by a smaller constant in inequality \eqref{A2}, then that inequality would fail for some nonnegative function $g_q$ for which $H^*g_q(x)<\infty$ for all $x>0$. This shows $p-1$ is optimal in both \eqref{A1} and \eqref{A2}.

It remains to show that the constant $C_p^{1/p}$ is optimal in both \eqref{A1} and \eqref{A2}. Let $k_\varepsilon(s)=(s/\varepsilon)\chi_{(1-\varepsilon,1)}(s)$. Then $H^*k_\varepsilon(t)=\min(1,(1-t)/\varepsilon)\chi_{(0,1)}(t)$. A simple sketch shows that as $\varepsilon$ goes to zero, the piecewise linear functions $H^*k_\varepsilon$ increase pointwise to $\chi_{(0,1)}$. The monotone convergence theorem implies that $H^*k_\varepsilon$ converges to  $\chi_{(0,1)}$ in $L^p$. But $H^*$ and $H^*-I$ are continuous maps on $L^p$, so we have
$$
\|H^*k_\varepsilon\|_p\to\|\chi_{(0,1)}\|_p=1\quad\text{and}\quad\|({H^*}^2-H^*)k_\varepsilon\|_p\to\|(H^*-I)\chi_{(0,1)}\|_p=C_p^{1/p}.
$$
It follows that $C_p^{1/p}$ is the smallest constant for which \eqref{A1} holds and the largest constant for which \eqref{A2} holds. This completes the proof.
\end{proof}
Combining Theorems \ref{main} with \ref{Kol2} gives the best constants in inequalities that relate $H^*-I$ and $H-I$ for decreasing functions.
\begin{corollary}\label{H-I} Let $1<p<\infty$ and let $f$ be a nonnegative, nonincreasing function. Then
 \begin{equation}
\label{E1}
(p-1)^2\|(H-I)f\|_p\leq \|(H^*-I)f\|_p \leq C_p^{1/p}(p-1)^{1/p}\|(H-I)f\|_p,
\end{equation}
if $1<p\leq 2$, and
\begin{equation}
\label{E2}
C_p^{1/p}(p-1)^{1/p}\|(H-I)f\|_p\leq \|(H^*-I)f\|_p \leq (p-1)^2\|(H-I)f\|_p,
\end{equation}
if $2 \leq p <\infty$. The constants $C_p^{1/p}(p-1)^{1/p}$ and $(p-1)^2$ are the best possible in both \eqref{E1} and \eqref{E2}.
\end{corollary}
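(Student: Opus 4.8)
The plan is to obtain the four inequalities by chaining the two-sided estimates of Theorem \ref{main} with those of Theorem \ref{Kol2}; since all of these hold for arbitrary nonnegative, nonincreasing $f$ (with the usual conventions when a norm is infinite), no separate discussion of the case $(H-I)f\notin L^p$ is required. For $1<p\le 2$, the left-hand inequality of \eqref{E1} follows from
$$
(p-1)^2\|(H-I)f\|_p\le(p-1)^2(p-1)^{-1}\|f\|_p=(p-1)\|f\|_p\le\|(H^*-I)f\|_p,
$$
the first step being Kolyada's upper bound from \eqref{K11} and the last being the lower bound from \eqref{U1}; the right-hand inequality of \eqref{E1} follows from
$$
\|(H^*-I)f\|_p\le C_p^{1/p}\|f\|_p\le C_p^{1/p}(p-1)^{1/p}\|(H-I)f\|_p,
$$
using the upper bound from \eqref{U1} and then Kolyada's lower bound from \eqref{K11}. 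The inequalities \eqref{E2} for $2\le p<\infty$ are derived in the same way, with \eqref{K22} and \eqref{U2} in place of \eqref{K11} and \eqref{U1}.

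It then remains to see that the constants, which are really only two, cannot be improved. For $(p-1)^2$ --- the left constant in \eqref{E1} and the right constant in \eqref{E2} --- I would use the family already built in Lemma \ref{S}: each $f_q-g_q$ is nonnegative and nonincreasing, and \eqref{test2} gives
$$
\lim_{q\to p^-}\frac{\|(H^*-I)(f_q-g_q)\|_p}{\|(H-I)(f_q-g_q)\|_p}=(p-1)^2,
$$
so $(p-1)^2$ cannot be enlarged in \eqref{E1} nor reduced in \eqref{E2}.

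For $C_p^{1/p}(p-1)^{1/p}$ --- the right constant in \eqref{E1} and the left constant in \eqref{E2} --- I expect the single test function $f=\chi_{(0,1)}$ to be extremal. Indeed $\|(H^*-I)\chi_{(0,1)}\|_p=C_p^{1/p}$ by the definition \eqref{const} of $C_p$, and a direct computation gives $(H-I)\chi_{(0,1)}(x)=x^{-1}\chi_{(1,\infty)}(x)$, whence $\|(H-I)\chi_{(0,1)}\|_p=(p-1)^{-1/p}$; the ratio of these norms is exactly $C_p^{1/p}(p-1)^{1/p}$, so this constant is attained and hence optimal in both \eqref{E1} and \eqref{E2}. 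There is no real obstacle in this corollary: it is a bookkeeping exercise assembling Theorems \ref{main} and \ref{Kol2}, and the only points demanding care are matching the correct direction of each Kolyada estimate with the correct direction of Theorem \ref{main}, and checking that the extremizers $f_q-g_q$ and $\chi_{(0,1)}$ are genuinely nonnegative and nonincreasing so that those theorems apply to them.
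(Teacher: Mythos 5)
Your proposal is correct and follows essentially the same route as the paper: the four inequalities are obtained by chaining Theorem \ref{main} with Theorem \ref{Kol2} in exactly the same way, optimality of $(p-1)^2$ comes from \eqref{test2} applied to $f_q-g_q$, and optimality of $C_p^{1/p}(p-1)^{1/p}$ comes from the same computation with $f=\chi_{(0,1)}$. No gaps.
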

\begin{proof}
First suppose $1\le p\le2$. Using \eqref{U1} of Theorem \ref{main} and \eqref{K11} of Theorem \ref{Kol2} we get
$$
(p-1)^2\|(H-I)f\|_p\le (p-1)\|f\|_p\le \|(H^*-I)f\|_p
$$
and 
$$
\|(H^*-I)f\|_p\le C_p^{1/p}\|f\|_p\le C_p^{1/p}(p-1)^{1/p}\|(H-I)f\|_p
$$
to give \eqref{E1}.
Next we suppose $2\le p<\infty$. Using \eqref{U2} of Theorem \ref{main} and \eqref{K22} of Theorem \ref{Kol2} we get
$$
C_p^{1/p}(p-1)^{1/p}\|(H-I)f\|_p\le C_p^{1/p}\|f\|_p\le \|(H^*-I)f\|_p
$$
and 
$$
\|(H^*-I)f\|_p\le (p-1)\|f\|_p\le (p-1)^2\|(H-I)f\|_p
$$
to give \eqref{E2}.

To see that the constant $C_p^{1/p}(p-1)^{1/p}$ is optimal, take $f=\chi_{(0,1)}$. We get $\|(H^*-I)f\|_p=C_p^{1/p}$ from \eqref{const}. Also, 
$$
(H-I)f(x)=\frac1x\int_0^x\chi_{(0,1)}(t)\,dt-\chi_{(0,1)}(x)
=\frac1x\chi_{(1,\infty)}(x)
$$
so 
$$
\|(H-I)f\|_p=\Big(\int_x^\infty x^{-p}\,dx\Big)^{1/p}=(p-1)^{-1/p}.
$$
It follows that no smaller constant is possible in the second inequality of \eqref{E1} and no larger constant is possible in the first inequality of \eqref{E2}.

For optimality of the constant $(p-1)^2$, we apply Lemma \ref{S}, taking $f$ to be $f_q-g_q$. Equation \eqref{test2} shows that no larger constant will satisfy the first inequality in \eqref{E1} and no smaller constant will satisfy the second inequality in \eqref{E2}.
\end{proof}
\section{Behavior of the optimal constants}
Recall that 
$$
C_p=\int_0^1|1+\ln x|^p\,dx.
$$
Both $C_p$ and $C_p^{1/p}$ appear above as best constants. Here they are considered as functions of $p$. We list some easily obtained properties of $C_p$ and $C_p^{1/p}$.

\begin{enumerate}[label = (\alph*), leftmargin=2em]
\item The integral defining $C_p$ converges for $0<p<\infty$.

\item A bit of calculus shows that $C_1=2/e$ and $C_2=1$. 

\item $C_p^{1/p}$ is a strictly increasing function of $p$ for $0<p<\infty$: If $0<p<q<\infty$, then H\"older's inequality with indices $q/p$ and $q/(q-p)$ implies
$$
C_p^{1/p}=\Big(\int_0^1|1+\ln x|^p\,dx\Big)^{1/p}
\le\Big(\int_0^1|1+\ln x|^q\,dx\Big)^{1/q}\Big(\int_0^1\,dx\Big)^{(q-p)/(pq)}=C_q^{1/q}.
$$
Since $|1+\ln x|$ is not constant, the inequality is strict. 

\item\label{LL} Well-known H\"older's inequality arguments, much like the one above, show that $\ln(C_p)$ and $\ln(C_{1/p}^p)$ are convex functions of $p$ on $(0,\infty)$.

\item $C_p$ is a strictly increasing function of $p$ for $2\le p<\infty$: If $2\le p<q$, then $C_q^{1/q}>C_2^{1/2}=1$ so $C_q>1$ and we have $C_q<C_q^{q/p}$. This shows
$$
C_p< C_q^{p/q}\le(C_q^{q/p})^{p/q}=C_q.
$$

\item $C_p$ is not an increasing function of $p$ for $0<p\le 1$: If $0<p\le1$, the inequality 
$$
|1+\ln x|^p\le (|1+\ln x|+1)^p\le |1+\ln x|+1,
$$
together with the dominated convergence theorem, shows that $\lim_{p\to 0^+} C_p=1$. But $C_1=2/e<1$.

\item $C_p^{1/p}$ does not tend to zero as $p\to0$: By l'Hospital's rule and a dominated convergence argument similar to the one in the previous item, 
$$
\lim_{p\to 0^+}\frac{\ln(C_p)}p=\lim_{p\to 0^+}\frac{\int_0^1|1+\ln x|^p\ln(|1+\ln x|)\,dx}{\int_0^1|1+\ln x|^p\,dx}=\int_0^1\ln(|1+\ln x|)\,dx.
$$
Thus, 
$$
\lim_{p\to 0^+} C_p^{1/p}=\exp\Big(\int_0^1\ln(|1+\ln x|)\,dx\Big).
$$
\item $C_p$ is a strictly increasing function of $p$ for $1\le p\le 2$. This is included in the next proposition.
\end{enumerate}
 
\begin{proposition}
\label{constante} For $p\ge1$, $C_p$ is a strictly increasing function of $p$. 
\end{proposition}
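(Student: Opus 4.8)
The plan is to use the convexity of $\ln C_p$ (item \ref{LL}) to reduce the statement to a single numerical inequality at $p=1$. Under the substitution $x=e^{-y}$,
$$
C_p=\int_0^\infty|y-1|^pe^{-y}\,dy ,
$$
and differentiating under the integral sign (legitimate by dominated convergence, uniformly for $p$ in compact subsets of $(-1,\infty)$) gives $C_p'=\int_0^\infty|y-1|^p\ln|y-1|\,e^{-y}\,dy$. Since $C_p>0$, $\ln C_p$ is differentiable, and by item \ref{LL} it is convex, so $(\ln C_p)'=C_p'/C_p$ is nondecreasing on $(0,\infty)$. Hence if $(\ln C_p)'\big|_{p=1}>0$, then $(\ln C_p)'>0$ on all of $[1,\infty)$ and $C_p$ is strictly increasing there; since $C_1>0$, this reduces the proposition to the single inequality $C_1'>0$.

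To compute $C_1'$, split the defining integral at $y=1$ and substitute $u=1-y$ on $(0,1)$ and $t=y-1$ on $(1,\infty)$; this gives $C_p=e^{-1}\big(\Gamma(p+1)+\int_0^1u^pe^u\,du\big)$. Differentiating at $p=1$ and using $\Gamma'(2)=\int_0^\infty t\ln t\,e^{-t}\,dt=1-\gamma$, where $\gamma$ denotes Euler's constant,
$$
e\,C_1'=(1-\gamma)+\int_0^1u\ln u\,e^u\,du=(1-\gamma)-\int_0^1u(-\ln u)e^u\,du ,
$$
so the whole matter comes down to proving $\int_0^1u(-\ln u)e^u\,du<1-\gamma$.

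For the left-hand side, expanding $e^u=\sum_{n\ge0}u^n/n!$ and using $\int_0^1u^{n+1}(-\ln u)\,du=(n+2)^{-2}$ shows $\int_0^1u(-\ln u)e^u\,du=\sum_{n\ge0}\frac1{n!(n+2)^2}$; bounding the tail by $(n+2)^2\ge25$ for $n\ge3$,
$$
\int_0^1u(-\ln u)e^u\,du\le\frac14+\frac19+\frac1{32}+\frac1{25}\Big(e-\frac52\Big)<0.402 .
$$
For the right-hand side, $1-\gamma>0.42$ because $\gamma<0.58$; the decimal value $\gamma=0.5772\ldots$ is classical, and one can also extract a sufficiently precise self-contained bound from elementary inequalities such as $\frac1k<\ln\frac{2k+1}{2k-1}$ (which telescopes to $\gamma=\ln 2-\sum_{k\ge1}\big(\ln\frac{2k+1}{2k-1}-\frac1k\big)$), although a crude estimate like $\gamma<H_n-\ln n$ with small $n$ does not suffice. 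Comparing, $\int_0^1u(-\ln u)e^u\,du<0.402<0.42<1-\gamma$, hence $C_1'>0$ and the proof is complete.

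The convexity reduction and the algebraic identity for $C_p$ are routine. The real obstacle is the last comparison: $C_1'$ is positive but small ($e\,C_1'\approx0.022$), so both the upper bound for $\sum_{n\ge0}\frac1{n!(n+2)^2}$ and the upper bound for $\gamma$ must be made sharp enough not to swamp that narrow margin. This is the step I expect to require genuine care.
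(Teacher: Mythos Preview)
Your proof is correct and follows essentially the same route as the paper: reduce to $C_1'>0$ via convexity, express $C_p$ as $e^{-1}\big(\Gamma(p+1)+\int_0^1 u^p e^u\,du\big)$, and compare $\sum_{n\ge0}\frac{1}{n!(n+2)^2}$ against $1-\gamma$. The only cosmetic differences are that the paper invokes convexity of $C_p$ directly rather than log-convexity (and explicitly remarks that the latter would also work), and uses a slightly cruder tail bound $\sum_{k\ge2}\frac{1}{(k+2)!}$ in place of your $\frac{1}{32}+\frac{1}{25}(e-\tfrac52)$.
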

\begin{proof} Let $p>0$. For each positive base $b\ne1$, the exponential function $p\mapsto b^p$ is strictly convex. It follows that if $0<p_0<p_1$, $0<\theta<1$, and $p=(1-\theta)p_0+\theta p_1$, then $b^p<(1-\theta)b^{p_0}+\theta b^{p_1}$. Taking $b=|1+\ln x|$ and integrating with respect to $x$, we get $C_p<(1-\theta)C_{p_0}+\theta C_{p_1}$. That is, $C_p$ is a strictly convex function for $0< p<\infty$. (Because logarithmic convexity implies convexity, this follows from item \ref{LL}, above, but we prefer the direct argument.) In particular, $\frac d{dp}C_p$ is strictly increasing for $0< p<\infty$. Therefore, to show that $C_p$ is strictly increasing for $1\le p<\infty$, it suffices to show that $\frac d{dp}C_p\ge0$ when $p=1$.

For this, we split the integral defining $C_p$ into two parts, letting $y=-1-\ln x$ in the first and letting $y=1+\ln x$ in the second. So
$$
C_p=\int_0^{1/e}(-1-\ln x)^p\,dx+\int_{1/e}^1(1+\ln x)^p\,dx
=\int_0^\infty y^pe^{-y}\,dy+\int_0^1 y^p e^y\,dy.
$$
Recognizing the first term as a gamma function and expressing the second as a power series, we get
$$
C_p=\Gamma(p+1)+\sum_{k=0}^\infty\frac1{k!}\int_0^1 y^{k+p}\,dy
=\Gamma(p+1)+\sum_{k=0}^\infty\frac1{k!}\frac1{k+p+1}.
$$
Differentiating the uniformly convergent sum term by term yields
$$
\frac d{dp}C_p=\Gamma'(p+1)-\sum_{k=0}^\infty\frac1{k!}\frac1{(k+p+1)^2}.
$$

Taking $p=1$ in the first term we use the formula $\Gamma'(2)=1-\gamma$. Here $\gamma$ is the Euler-Mascheroni constant, which is known to have approximate value $0.5772156649\dots$. Thus $\Gamma'(2)>1-0.578=0.421$.

Taking $p=1$ in the second term, we have
$$
\sum_{k=0}^\infty\frac1{k!}\frac1{(k+2)^2}<\frac14+\frac19+\sum_{k=2}^\infty\frac{1}{(k+2)!}
= \frac14+\frac19+e-\Big(1+1+\frac12+\frac16\Big).
$$
A calculation shows this is less than $0.42$ and we obtain $\frac d{dp}C_p>0.421-0.42>0$ when $p=1$. This completes the proof.
\end{proof}

\end{document}